\def\frk{\frak}               
\def\aa{{\frk a}}
\def\pp{{\frk p}}
\def\qq{{\frk q}}
\def\mm{{\frk m}}
\def\Phi{{\frk n}}
\def\Phi{{\frk N}}
\def\bb{{\bold b}}
\def\opn#1#2{\def#1{\operatorname{#2}}} 
\opn\chara{char} \opn\length{\ell} \opn\pd{pd} \opn\rk{rk}
\opn\projdim{proj\,dim} \opn\injdim{inj\,dim} \opn\rank{rank}
\opn\depth{depth} \opn\sdepth{sdepth} \opn\fdepth{fdepth}
\opn\grade{grade} \opn\height{height} \opn\embdim{emb\,dim}
\opn\codim{codim}  \opn\min{min} \opn\max{max}
\opn\Tr{Tr} \opn\bigrank{big\,rank}
\opn\superheight{superheight}\opn\lcm{lcm}
\opn\trdeg{tr\,deg}
\opn\reg{reg} \opn\lreg{lreg} \opn\ini{in} \opn\lpd{lpd}
\opn\size{size}
\opn\div{div} \opn\Div{Div} \opn\cl{cl} \opn\Cl{Cl}
\opn\Spec{Spec} \opn\Supp{Supp} \opn\supp{supp} \opn\Sing{Sing}
\opn\Ass{Ass} \opn\Min{Min}
\opn\Ann{Ann} \opn\Rad{Rad} \opn\Soc{Soc}
\opn\Im{Im} \opn\Ker{Ker} \opn\Coker{Coker} \opn\Am{Am}
\opn\Hom{Hom} \opn\Tor{Tor} \opn\Ext{Ext} \opn\End{End}
\opn\Aut{Aut} \opn\id{id}  \opn\deg{deg}
\opn\Gin{Gin} \opn\Hilb{Hilb}
\opn\aff{aff} \opn\con{conv} \opn\relint{relint} \opn\st{st}
\opn\lk{lk} \opn\cn{cn} \opn\core{core} \opn\vol{vol}
\opn\link{link} \opn\star{star}
\opn\gr{gr}
\def\pot#1#2{#1[\kern-0.28ex[#2]\kern-0.28ex]}
\opn\dirlim{\underrightarrow{\lim}}
\opn\inivlim{\underleftarrow{\lim}}
\let\to=\rightarrow
\def\Implies{\ifmmode\Longrightarrow \else
        \unskip${}\Longrightarrow{}$\ignorespaces\fi}
\def\implies{\ifmmode\Rightarrow \else
        \unskip${}\Rightarrow{}$\ignorespaces\fi}
\def\iff{\ifmmode\Longleftrightarrow \else
        \unskip${}\Longleftrightarrow{}$\ignorespaces\fi}
\newtheorem{Theorem}{Theorem}[]
\newtheorem{Lemma}[Theorem]{Lemma}
\newtheorem{Corollary}[Theorem]{Corollary}
\newtheorem{Remark}[Theorem]{Remark}
\newtheorem{Conjecture}[Theorem]{Conjecture}
\newtheorem{Question}[Theorem]{Question}
\let\epsilon\varepsilon
\let\phi=\varphi
\let\kappa=\varkappa
\def\qed{\ifhmode\textqed\fi
      \ifmmode\ifinner\quad\qedsymbol\else\dispqed\fi\fi}
\def\textqed{\unskip\nobreak\penalty50
       \hskip2em\hbox{}\nobreak\hfil\qedsymbol
       \parfillskip=0pt \finalhyphendemerits=0}
\def\dispqed{\rlap{\qquad\qedsymbol}}
\opn\dis{dis}
\def\pnt{{\raise0.5mm\hbox{\large\bf.}}}
\opn\Lex{Lex}
\begin{document}

\title{ The Bass-Quillen Conjecture and Swan's question}

\author{ Dorin Popescu }

\address{Dorin Popescu, Simion Stoilow Institute of Mathematics of the Romanian Academy, Research unit 5,
University of Bucharest, P.O.Box 1-764, Bucharest 014700, Romania}
\email{dorin.popescu@imar.ro}

\begin{abstract} We present a question which implies a complete  positive answer for the Bass-Quillen Conjecture.

  {\it Key words } : Regular Rings, Smooth Morphisms,    Projective Modules\\
 {\it 2010 Mathematics Subject Classification: Primary 13C10, Secondary 19A13,13H05,13B40.}
\end{abstract}

\maketitle

\vskip 0.5 cm

\section*{Introduction}

The theory of projective modules over polynomial algebras over a regular ring $R$ was an important subject in Commutative Algebra starting with Serre's Conjecture (see \cite{La}) and its extension considered by Bass and Quillen.

\begin{Conjecture} (Bass-Quillen Conjecture, \cite[Problem IX]{B}, \cite{Q}) Let $R$ be a regular ring. Every finitely generated projective module $P$ over a polynomial $R$-algebra, $R[T]$, $T=(T_1,\ldots, T_n)$ is extended from $R$, i.e. 
$P\cong R[T]\otimes_R(P/(T)P)$. 
\end{Conjecture}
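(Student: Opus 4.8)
The plan is to reduce the conjecture, in two stages, to a desingularization statement about regular local rings, which is the form in which I expect it to become tractable. \textbf{Stage one} is Quillen's local--global patching: a finitely generated projective $R[T]$-module $P$ is extended from $R$ if and only if $P_\mm$ is extended from $R_\mm$ over $R_\mm[T]$ for every maximal ideal $\mm$ of $R$. Since each $R_\mm$ is again regular local, this reduces the conjecture to the case that $(R,\mm)$ is a regular \emph{local} ring, and by a routine finite-presentation argument one may assume $R$ is Noetherian. I would resist passing to the completion, since extendedness does not descend cheaply along $R\to\hat R$.

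\textbf{Stage two} handles the equicharacteristic case. When $R$ contains a field I would invoke Lindel's theorem, which proves the conjecture for regular local rings essentially of finite type over a field. To reach an arbitrary equicharacteristic regular local ring I would use Popescu's General N\'eron Desingularization: the structure map from the prime field $k$ is regular (its fibres are geometrically regular, because $k$ is perfect and $R$ is regular), so $R$ is a filtered colimit of smooth finite-type $k$-algebras $R_\lambda$. A projective $P$ over $R[T]$ descends to some $R_\lambda[T]$, where (after localizing) $R_\lambda$ is regular local and essentially of finite type over $k$, hence is extended there by Lindel's theorem; and extendedness is preserved under the base change $R_\lambda\to R$. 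This disposes of the equicharacteristic case entirely.

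\textbf{Stage three} is the mixed characteristic case, and this is where I expect all the difficulty to concentrate. The natural strategy is to replay Stage two with $\ZZ$ (or an unramified complete discrete valuation ring) in place of $k$: if $\ZZ\to R$ were regular, then $R$ would be a filtered colimit of smooth $\ZZ$-algebras, and one would only need a Lindel-type theorem over $\ZZ$. But $\ZZ\to R$ is regular exactly when $R$ is unramified, since for ramified $R$ the special fibre $R/pR$ is non-reduced; so General N\'eron Desingularization does not apply directly, and there is no field of definition to fall back on. This is precisely the gap the paper is built to fill. The plan, following the paper, is therefore to isolate a single structural statement---Swan's question---to the effect that an arbitrary, possibly ramified, regular local ring can still be written as a filtered colimit of regular rings that are ``geometric'' over $\ZZ$ in a manner compatible with Quillen patching and a Lindel-type argument. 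I expect this last reduction, reconciling ramification with an approximation by finite-type regular $\ZZ$-algebras while retaining control of extendedness, to be the true obstacle; it is exactly the reason the conjecture is here reduced to, rather than deduced from, Swan's question.
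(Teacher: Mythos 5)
The statement you were asked to prove is an open conjecture: the paper does not prove it and does not claim to. Its main result (Theorem \ref{bq}) is conditional --- it shows that a positive answer to Question \ref{q2} would imply the conjecture --- so there is no complete argument for you to match, and your proposal, which candidly leaves its ``Stage three'' unresolved, is likewise not a proof. That said, your Stages one and two do track the known reductions accurately: Quillen patching to the local case, and the equicharacteristic (and, via \cite[Theorem 3.1]{P1}, the unramified mixed-characteristic) case by combining General N\'eron Desingularization with Lindel's theorem and a colimit argument of the kind in Lemma \ref{l2}. The genuine gap is exactly where you place it, namely the ramified case $0\neq p=\mbox{char}\ k\in\mm^2$, where $\ZZ\to R$ fails to be regular and desingularization does not apply.

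One substantive correction to your Stage three: you say the conjecture ``is here reduced to, rather than deduced from, Swan's question,'' but that misidentifies the residual problem. Swan's question (Question \ref{qs}) already has a complete positive answer --- indeed a refined one, Theorem \ref{tm}, which exhibits any ramified regular local ring as a filtered inductive limit of regular local rings essentially smooth over rings of the form $A_i/(p-b_i)$ with $A_i$ essentially smooth over ${\bf Z}_{(p)}$ and $b_i\in\aa_i^2$. The point of the paper is that this structure theorem alone does not close the argument: the limit rings $A_i/(p-b_i)$ are essentially of finite type over $\ZZ$ but still ramified, so Lindel--Swan (Theorem \ref{tls}) does not cover them. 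What the conjecture is actually reduced to is Question \ref{q2} --- the Bass--Quillen property for the specific rings $A/(p-b)$ --- via the chain Lemma \ref{l1} (ascent along essentially smooth maps, using Roitman and Lindel) followed by Lemma \ref{l2} (passage to filtered limits). Your plan stops one reduction short of where the paper lands, and it is that last, sharper target that constitutes the open problem.
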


Important positive answers were given by 
 Quillen (see \cite{Q}) and Suslin (see  \cite{Su}) in dimension $\leq 1$ and Murthy \cite{Mu1} in dimension $2$. 
Later Lindel \cite{L} and Swan \cite{Mu} gave, in particular, positive answers for many regular local rings, essentially of finite type $\bf Z$-algebras.
 
 \begin{Theorem} (Lindel, Swan) \label{tls} Let $(R,\mm,k)$ be a regular local ring, essentially of finite type over $\bf Z$ and $p=$char $k$. The following statements hold.
 \begin{enumerate}
 \item If $p=0$ then $R$ is essentially smooth over its prime field.
\item If $p\not \in \mm^2$ then $R$ is essentially smooth over $\bf Z$.
\item If $p=0$ or  $p\not \in \mm^2$ then the BQ Conjecture holds for $R$.
\end{enumerate}
\end{Theorem}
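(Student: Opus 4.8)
The three items split naturally into a ``regular $\Rightarrow$ smooth'' part, (1) and (2), and a ``smooth $\Rightarrow$ BQ'' part, (3), and I would establish them in that order. For (1), note that since $\chara k=0$ the composite $\ZZ\to R\to k$ kills no nonzero integer, so every nonzero integer is already a unit of the local ring $R$; hence $\QQ\subseteq R$ and $R$, being a localization of a finitely generated $\ZZ$-algebra, is in fact essentially of finite type over the perfect field $\QQ$. The plan is then to invoke the standard fact that a regular local ring essentially of finite type over a perfect field is essentially smooth over it: over a perfect field geometric regularity coincides with ordinary regularity, and via the Jacobian/fibre criterion this promotes regularity of $R$ to smoothness of $\QQ\to R$.

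For (2) I would verify the two hypotheses of the fibrewise smoothness criterion for the local homomorphism $\ZZ_{(p)}\to R$. First, $R$ is a regular local ring, hence a domain, and it contains $\ZZ$ with $p\neq 0$ (because $p\in\mm\setminus\mm^2$); so $R$ is $\ZZ$-torsion free and therefore flat over the principal ideal domain $\ZZ$. Second, since $R$ is regular and $p\in\mm\setminus\mm^2$, the element $p$ is part of a regular system of parameters, so the closed fibre $R/pR$ is again regular; as $\FF_p$ is perfect, $R/pR$ is geometrically regular over $\FF_p$. The criterion that a flat, essentially finite type local homomorphism with geometrically regular closed fibre is formally (hence essentially) smooth then yields that $R$ is essentially smooth over $\ZZ_{(p)}$, and therefore over $\ZZ$ since $\ZZ\to\ZZ_{(p)}$ is itself essentially smooth.

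For (3) the heart is Lindel's theorem \cite{L}, together with Swan's refinements \cite{Mu}: if a regular local ring is essentially smooth over a base $D$ for which the Bass--Quillen conjecture is known, then BQ holds for it as well. Since $R$ is already local, one applies this directly, taking $D=\QQ$ in the case $p=0$ (using (1)) and $D=\ZZ$ in the case $p\notin\mm^2$ (using (2)). The conjecture is known for the field $\QQ$ by the solution of Serre's conjecture due to Quillen and Suslin (\cite{Q}, \cite{Su}, \cite{La}), and for $\ZZ$, which is one-dimensional, by the dimension $\leq 1$ results of Quillen and Suslin; this closes the argument.

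I expect the entire difficulty to sit inside Lindel's transfer theorem invoked in (3), which is not a formal descent. Its proof requires producing, for the smooth local $R$ over $D$, a well-chosen étale (Nisnevich-type) neighbourhood in which $R$ is realized as a localization of a polynomial ring $D[X_1,\dots,X_d]$ inducing an isomorphism on residue fields, and then running a Horrocks-style ``extension'' argument, combined with Quillen patching, to show the given projective $R[T]$-module is extended. By contrast, (1) and (2) are comparatively soft: they merely repackage regularity as smoothness using the perfectness of $\QQ$ and $\FF_p$ and the $\ZZ$-flatness of $R$. The genuinely hard step is thus the geometric input — the existence of the right étale neighbourhood over the Dedekind base — on which Lindel's theorem rests.
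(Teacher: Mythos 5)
The paper does not prove this theorem at all: it is quoted as a known result of Lindel and Swan, with the references \cite{L} and \cite{Mu} standing in for the argument, so there is no internal proof to compare your sketch against. That said, your outline is correct and matches the standard route. Parts (1) and (2) are indeed the ``soft'' half: in (1) the observation that $\chara k=0$ forces every nonzero integer to be a unit (else it would lie in $\mm$ and die in $k$) gives $\QQ\subseteq R$, and regular plus essentially of finite type over a perfect field yields essential smoothness; in (2) the hypothesis $p\notin\mm^2$ forces $p\neq 0$ in the domain $R$, hence $\ZZ$-flatness, and $p$ being part of a regular system of parameters makes the closed fibre $R/pR$ regular, so the local (fibrewise) criterion applies --- note that since $R$ is local, smoothness at the closed point already gives essential smoothness everywhere, and the generic fibre is a localization of $R$ over the perfect field $\QQ$, so ``all fibres geometrically regular'' does hold. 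Part (3) is exactly the content of the paper's later Lemma \ref{l1} (transfer of BQ along essentially smooth morphisms, via Roitman \cite{R} for polynomial extensions and the Corollary of \cite{L} for \'etale neighbourhoods), applied with base $\QQ$ (Quillen--Suslin) or $\ZZ_{(p)}$ (the dimension $\leq 1$ case); you correctly identify that the entire nontrivial content sits in Lindel's theorem and Swan's arithmetic refinement, which both you and the paper take as a black box.
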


 Then Swan noticed that it is  useful to have a positive answer to the following question.
 
 \begin{Question} (Swan \cite{Mu})\label{qs}  Is it a regular local ring a filtered inductive limit of regular local rings, essentially of finite type over $\bf Z$?
\end{Question} 
 
 The Bass-Quillen Conjecture (shortly BQ Conjecture) is connected with the following one.
 
 \begin{Conjecture} (Bass-Quillen-Suslin  Conjecture)  Let $R$ be a local ring, Assume either that  $R$ is regular local ring or that $1/r!\in R$. Let $v$ be a unimodular vector over $R[T]$ of length $(r+1)$. Then $v$ can be completed to an invertible matrix over $R[T]$.
 \end{Conjecture}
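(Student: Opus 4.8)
The plan is to convert the completability of the unimodular vector into a freeness statement and then treat the two alternative hypotheses by completely different tools. Recall that a unimodular $v$ of length $r+1$ over $A=R[T]$ determines a split surjection $A^{r+1}\to A$, $e_i\mapsto v_i$, whose kernel $P$ is a finitely generated projective $A$-module of rank $r$ with $P\dirsum A\iso A^{r+1}$; thus $P$ is \emph{stably free}, and $v$ can be completed to an invertible matrix precisely when $P$ is free. So it suffices to prove that $P$ is free under either of the two assumptions on $R$.

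First I would dispose of the regular local case, where the statement is nothing but the rank-$r$ instance of the Bass--Quillen Conjecture. Indeed, that conjecture asserts $P\iso R[T]\tensor_R(P/(T)P)$, i.e. $P$ is extended from $R$. Since $R$ is local, the finitely generated projective $R$-module $P/(T)P$ is free, and therefore so is $P$, so $v$ is completable. In this way the regular local case is reduced to the Bass--Quillen Conjecture itself, which the present paper in turn attacks through Swan's Question~\ref{qs}, the Lindel--Swan Theorem~\ref{tls} for rings essentially of finite type over $\bf Z$, and a desingularization/limit argument transporting the conclusion to an arbitrary regular local ring.

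For the case $1/r!\in R$ the ring need not be regular, so the homological input vanishes and one must argue directly with unimodular rows. The key preliminary remark is that over the \emph{local} ring $R$ every unimodular row is completable: one of its coordinates must be a unit (the others cannot all lie in $\mm$), so elementary transformations reduce the row to $(1,0,\dots,0)$. Hence $v(0)$ is completable over $R$, and the remaining task is to transport this along the variable $T$. Here I would invoke Suslin's theory of the elementary orbit set $\mathrm{Um}_{r+1}(R[T])/E_{r+1}(R[T])$: the invertibility of $r!$ is exactly the hypothesis under which Suslin's factorial (``$n!$'') and divisibility results apply, forcing the class of $v(T)$ to coincide with that of its specialization $v(0)$. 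Combining a Horrocks-type local--global principle for elementary orbits of unimodular rows with these $r!$-divisibility statements should yield $[v]=[v(0)]=0$, that is, $P$ free.

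The main obstacle I expect lies entirely in this last step. The regular local case is essentially formal once the Bass--Quillen Conjecture is granted, but the $1/r!\in R$ case is delicate: the passage from $R$ to $R[T]$ is not governed by Quillen patching (localizing an already local ring yields nothing), so one is thrown back on the fine arithmetic of elementary orbits, where the precise role of $r!$-invertibility and the Horrocks monic-polynomial trick must be controlled. I therefore anticipate that the real work, and the only genuine difficulty, is in establishing and exploiting the group structure and the $r!$-divisibility of the orbit set over $R[T]$.
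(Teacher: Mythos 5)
This statement is labelled a \emph{Conjecture} in the paper, and the paper offers no proof of it: it is recorded as an open problem, with a remark that it is known in various cases due to Rao and that the Bass--Quillen Conjecture itself is only established conditionally (Theorem~\ref{bq} derives it from a positive answer to Question~\ref{q2}). So there is no proof in the paper against which to match your argument, and your proposal does not supply one either. Your structural reductions are correct and standard: a unimodular row $v$ of length $r+1$ over $A=R[T]$ is completable to an invertible matrix iff the rank-$r$ stably free kernel $P$ of $A^{r+1}\to A$ is free, and if the Bass--Quillen Conjecture holds for a regular local $R$ then $P$ is extended from $R$ and hence free. But this reduces the regular local case of one open conjecture to another open conjecture --- precisely the one this paper is trying to settle --- so nothing is proved; the argument is conditional on the very problem under discussion.

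The second branch, $1/r!\in R$ with $R$ an arbitrary (possibly non-regular) local ring, is where the conjecture has genuine independent content, and there your text is explicitly a placeholder: you observe that $v(0)$ is completable over the local ring $R$ and then assert that Suslin's $r!$-divisibility results together with ``a Horrocks-type local--global principle'' \emph{should} force $[v]=[v(0)]$ in the elementary orbit set of $\mathrm{Um}_{r+1}(R[T])$. No such general statement is currently known; Suslin's $n!$ theorem gives completability only for rows of the special shape $(v_0^{r!},v_1,\dots,v_r)$ (and related divisibility of Mennicke-type symbols), not for arbitrary unimodular rows over $R[T]$, and as you yourself note, Quillen patching gives nothing new over a local base. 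So the step you identify as ``the real work'' is exactly the open problem, and it is not carried out. In short: both branches of the proposal terminate in open questions, which is consistent with the statement being a conjecture, but it means there is no proof here to validate.
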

 This conjecture holds in many cases given for example by Rao \cite{Ra1}, \cite{Ra} who also proved the BQ Conjecture   for regular local rings of dimension $3$ with residue characteristic $\not = 2,3$. 
  
    For a regular local ring $(R,\mm,k)$ containing a field, or with $p:=$char $k\not \in \mm^2$, Swan's question has a  positive answer in \cite{P1}.  Using this partial positive answer and (iii) from Theorem \ref{tls} we got the following corollary (see \cite[Theorem 4.1]{P1} and also \cite[Theorems 2.1, 2.2]{S}).
    
 \begin{Corollary}  \label{c1}  
     The Bass-Quillen Conjecture holds for $R$ if $p=0$ or $p\not\in \mm^2$.
\end{Corollary}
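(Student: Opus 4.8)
The plan is to use the positive answer to Swan's Question supplied by \cite{P1} to reduce the statement for an arbitrary regular local ring to the case of rings essentially of finite type over $\mathbf{Z}$, where Theorem \ref{tls}(iii) is available, and then to transport the ``extended'' property across the resulting filtered inductive limit. First I would apply \cite{P1}: since $p=0$ or $p\notin\mm^2$, the regular local ring $(R,\mm,k)$ can be written as a filtered inductive limit $R=\varinjlim_i R_i$ of regular local rings $(R_i,\mm_i,k_i)$, each essentially of finite type over $\mathbf{Z}$, with local transition maps and local maps $R_i\to R$. The crucial feature to extract is that the $R_i$ again satisfy the residue characteristic hypothesis of Theorem \ref{tls}. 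If $p=0$, then the injection $k_i\hookrightarrow k$ forces $\chara k_i=0$. If $p\notin\mm^2$, then from $\mm_i^2R\subseteq\mm^2$ one gets $p\notin\mm_i^2$ for some index $i_0$; since the transition maps are flat with regular closed fibres, a regular parameter of $R_{i_0}$ stays a regular parameter in each $R_j$ with $j\ge i_0$, so after truncating the system above $i_0$ we may assume $p\notin\mm_i^2$ throughout. In either case Theorem \ref{tls}(iii) applies to every $R_i$.

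Next, let $P$ be a finitely generated, hence finitely presented, projective $R[T]$-module. Because the polynomial construction commutes with filtered colimits we have $R[T]=\varinjlim_i R_i[T]$, and by the standard descent of finitely presented projective modules along a filtered colimit of rings there are an index $i$ and a finitely generated projective $R_i[T]$-module $P_i$ with $P\cong R[T]\otimes_{R_i[T]}P_i$; enlarging $i$ if necessary we keep it within the truncated system, so $R_i$ satisfies the residue condition above. By Theorem \ref{tls}(iii) applied to $R_i$, the module $P_i$ is extended from $R_i$, that is $P_i\cong R_i[T]\otimes_{R_i}\bar P_i$ with $\bar P_i=P_i/(T)P_i$.

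It then remains to base change back along $R_i[T]\to R[T]$. Tensoring the isomorphism for $P_i$ with $R[T]$ over $R_i[T]$ gives
$$P\cong R[T]\otimes_{R_i[T]}P_i\cong R[T]\otimes_{R_i}\bar P_i\cong R[T]\otimes_R\bigl(R\otimes_{R_i}\bar P_i\bigr),$$
where the last step uses that $R_i\to R[T]$ factors through $R$. Reducing modulo $(T)$ identifies $R\otimes_{R_i}\bar P_i$ with $P/(T)P$, since both equal $R\otimes_{R_i[T]}P_i$ for the common map $R_i[T]\to R$ sending $T\mapsto 0$. Hence $P\cong R[T]\otimes_R(P/(T)P)$, which is precisely the assertion that $P$ is extended from $R$. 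The main obstacle is the first step: one must ensure that the inductive system furnished by \cite{P1} can be chosen so that every $R_i$ inherits the condition $\chara k_i=0$ or $p\notin\mm_i^2$, so that Theorem \ref{tls}(iii) is genuinely applicable at each finite level; once this interface with \cite{P1} is secured, the descent of the projective module and the transport of the extension property are routine limit and base-change manipulations.
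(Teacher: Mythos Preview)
Your proposal is correct and matches the paper's approach: use \cite{P1} to write $R$ as a filtered inductive limit of regular local rings $R_i$ essentially of finite type over $\mathbf{Z}$, apply Theorem~\ref{tls}(iii) to each $R_i$, and pass the conclusion through the limit (the paper packages this last step as Lemma~\ref{l2}). Your ``main obstacle'' dissolves immediately, though: locality of $R_i\to R$ sends $\mm_i^2$ into $\mm^2$, so $p\notin\mm^2$ forces $p\notin\mm_i^2$ for \emph{every} $i$, and the induced embedding $k_i\hookrightarrow k$ gives $\chara k_i=\chara k$; no truncation or flatness argument is needed.
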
  
     
Recently, we gave a complete positive answer to Swan's Question   in \cite[Theorem 17]{P3} (see here Theorem \ref{tm}).

The purpose of this paper is to show that a positive answer to  the following question   gives a 
  complete positive answer to the BQ Conjecture (see Theorem \ref{bq}).
  
\begin{Question}  \label{q2} Let $(R,\mm)$ be a regular local ring, which is essentially smooth over ${\bf Z}_{(p)}$ and $b\in \mm^2$. Is it true the BQ Conjecture for the regular local ring $R/(p-b)$? 
\end{Question}
  We owe thanks to Ravi Rao for some useful comments.
\vskip 0.3 cm

\section{The Bass-Quillen Conjecture}
\vskip 0.3 cm

We start reminding  some definitions concerning smooth morphisms  after \cite{M}, or \cite{S}.
A ring morphism $R\to R'$ of Noetherian rings has  {\em regular fibers} if for all prime ideals $\pp\in \Spec R$ the ring $R'/\pp R'$ is a regular  ring.
It has {\em geometrically regular fibers}  if for all prime ideals $\pp\in \Spec R$ and all finite field extensions $K$ of the fraction field of $R/\pp$ the ring  $K\otimes_{R/\pp} R'/\pp R'$ is regular.
A flat morphism of Noetherian rings  is {\em regular} if its fibers are geometrically regular. If it is regular of finite type, or essentially of finite type  then it is called {\em smooth}, resp. {\em essentially smooth}.

The proof of our positive answer to Swan's Question says actually a little  more (see \cite[Theorem 17]{P3}).

\begin{Theorem} \label{tm} Every regular local ring $(R,\mm,k)$ with $0\not = p=$char $k\in \mm^2$  is a filtered inductive limit of regular local rings $R_i$,
 essentially smooth over a regular local $\bf Z$-algebra $A_i/(p-b_i)$, where $(A_i,\aa_i)$ is a regular local ring, essentially smooth over  
 ${\bf Z}_{(p)}$, and $b_i\in \aa_i^2$.
\end{Theorem}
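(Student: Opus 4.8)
The plan is to reduce the ramified case $p\in\mm^2$ to the unramified case, which is already settled in \cite{P1}, by absorbing the ramification into a single equation. Concretely, I would try to realize $R$ as $R\cong S/(p-\phi)$, where $(S,\mm_S)$ is an unramified regular local ring (so $p\notin\mm_S^2$) and $\phi\in\mm_S^2$. The model for this is Cohen's structure theorem: the completion $\widehat R$ of a $d$-dimensional complete ramified regular local ring admits, after choosing a coefficient ring $(C,pC,k)$, a presentation $\widehat R\cong C[[X_1,\dots,X_d]]/(p-\widehat\phi)$ with $\widehat\phi\in(X_1,\dots,X_d)^2$, and the ambient power series ring $\widehat S:=C[[X_1,\dots,X_d]]$ is regular and unramified. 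So at the level of completions the desired $S$ exists; the first task is to produce such an $S$ for $R$ itself.

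Granting the deramification $R\cong S/(p-\phi)$, I would apply the positive answer to Swan's Question in the unramified case (\cite{P1}, together with part (ii) of Theorem \ref{tls}) to the unramified regular local ring $S$: this writes $S=\varinjlim_i A_i$ as a filtered inductive limit of regular local rings $A_i$ that are essentially smooth over ${\bf Z}_{(p)}$, each with $p\notin\aa_i^2$ (unramifiedness descends along the faithfully flat maps $A_i\to S$, since $p\notin\mm_S^2$ forces $p\notin\aa_i^2$). Because $\phi\in\mm_S^2$ lies in the colimit, for $i$ large it is the image of some $b_i\in\aa_i^2$. Since colimits commute with quotients one obtains $R=S/(p-\phi)=\varinjlim_i A_i/(p-b_i)$, and each $A_i/(p-b_i)$ is again regular local (indeed $p-b_i\notin\aa_i^2$) and ramified, with $p=b_i\in\mm^2$. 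Thus $R$ is exhibited as a filtered inductive limit of the required rings, taking $R_i=A_i/(p-b_i)$ itself (essential smoothness over $A_i/(p-b_i)$ then holds trivially, while the statement leaves room for a harmless smooth layer should the approximation of $\phi$ require one).

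The main obstacle is precisely the first step: constructing the unramified deramification $S$ with $R\cong S/(p-\phi)$ and $\phi\in\mm_S^2$ for a possibly non-excellent $R$. Cohen's theorem supplies this over the completion, but a general regular local ring need not be a $G$-ring, so $R\to\widehat R$ cannot be assumed regular and the presentation of $\widehat R$ does not descend for free. I would attempt to build $S$ directly over $R$ from a partial coefficient system --- a discrete valuation subring providing a uniformizing structure together with representatives of a separating transcendence base of $k$ over the perfect field ${\bf F}_p$ (separability being automatic over ${\bf F}_p$) --- and then adjoin the regular parameters of $R$, arranging $p$ to fall into the square of the maximal ideal. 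Controlling simultaneously that $S$ is Noetherian, regular and unramified, that $\phi$ genuinely lands in $\mm_S^2$, and that the residue field is realized correctly is the delicate heart of the argument; General Néron Desingularization (\cite{Mu}) is the tool that converts this completion-level and coefficient-ring-level data into the finite-type, essentially smooth ${\bf Z}_{(p)}$-algebra data $A_i$ and the ramification parameters $b_i\in\aa_i^2$ demanded by the statement.
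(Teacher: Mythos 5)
First, a point of reference: the paper does not actually prove Theorem \ref{tm} in the text --- it is imported verbatim from \cite[Theorem 17]{P3} --- so there is no in-text proof to match your argument against; it has to be judged on its own. Judged that way, it has a genuine gap, and it sits exactly where you locate it: the ``deramification'' $R\cong S/(p-\phi)$ with $(S,\mm_S)$ regular local, $p\notin\mm_S^2$ and $\phi\in\mm_S^2$. Cohen's structure theorem supplies such a presentation only for the completion $\widehat R$, and nothing in your sketch produces one for $R$ itself: the construction you gesture at (a coefficient DVR, lifts of a transcendence basis, the regular parameters of $R$) naturally yields \emph{subrings} of $R$, not a regular local ring of dimension $\dim R+1$ surjecting onto $R$ with principal kernel; and the presentation of $\widehat R$ cannot be descended, since $R\to\widehat R$ need not be regular for non-excellent $R$. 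Note moreover that the finite-type instance of the deramification you want is precisely Corollary \ref{c2} of this paper, which is \emph{deduced from} Theorem \ref{tm}; assuming a global version of it as input is therefore close to circular. Finally, if your step 1 were available you would prove the strictly stronger statement $R=\varinjlim_i A_i/(p-b_i)$ with no essentially smooth layer at all; the presence of that layer in the statement is itself a signal that the actual argument does not present $R$ as a hypersurface quotient of an unramified ring.

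The rest of your reduction (steps 2--4) is essentially sound and is the easy part: for unramified $S$ the map ${\bf Z}_{(p)}\to S$ is regular, so General N\'eron Desingularization writes $S=\varinjlim_i A_i$ with $A_i$ essentially smooth over ${\bf Z}_{(p)}$; the claim $p\notin\aa_i^2$ follows simply because $A_i\to S$ is a local homomorphism, so $\aa_i^2$ maps into $\mm_S^2$ (no flatness is available or needed); $\phi$ lifts to some $b_i\in\aa_i^2$ for $i$ large; and exactness of filtered colimits gives $R=\varinjlim_i A_i/(p-b_i)$ with each quotient regular local. For comparison, the route actually taken in \cite{P3} --- visible here through Theorem \ref{s} in Section 2, quoted from the same source --- is to absorb the ramification not by re-presenting $R$ but by running a N\'eron-type desingularization for flat algebras over non-reduced bases of the form $\Phi={\bf Z}_{(p)}[\Gamma]/(\Gamma^s)$ whose reduction modulo $\Gamma$ is regular, with the $A_i/(p-b_i)$ structure appearing only on the finite-type approximations. (A small citation slip: General N\'eron Desingularization is not \cite{Mu}; that item is Swan's notes on Lindel's theorem.)
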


In fact, this theorem gives also   the structure  of regular local rings essentially of finite type  over $\bf Z$ in the case $0\not=$char $k\in \mm^2$, which is not covered by (i),(ii) from Theorem \ref{tls}.

\begin{Corollary} \label{c2}
Let $B$ be a
 $\bf Z$-algebra regular local,  
  essentially of  finite type. Then $B$ has the form $B=A/(p-b) $, where $(A,\aa)$ is a regular local ring essentially smooth over $\bf Z$ and 
$b\in \aa^2$.
\end{Corollary}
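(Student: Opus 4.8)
The plan is to deduce the statement from Theorem \ref{tm}, which supplies building blocks of exactly the required shape, and then to sharpen "essentially smooth over such a block'' into the literal presentation $A/(p-b)$. First observe that in any presentation $B=A/(p-b)$ with $b\in\aa^2$ the integer $p$ must be the residue characteristic $\chara k$, and its image in $B$ equals that of $b$, hence lies in $\mm_B^2$; so such a presentation can exist only when $0\neq p=\chara k\in\mm_B^2$. The cases $p=0$ and $p\notin\mm_B^2$ are those of Theorem \ref{tls}(i),(ii), so I assume $0\neq p=\chara k\in\mm_B^2$. Since $B$ is local with residue characteristic $p$, every integer prime to $p$ is a unit, so $B$ is a ${\bf Z}_{(p)}$-algebra, regular local and essentially of finite type. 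Applying Theorem \ref{tm} to $B$ gives a filtered inductive limit $B=\dirlim_i R_i$ in which each $R_i$ is regular local and essentially smooth over $C_i:=A_i/(p-b_i)$, with $(A_i,\aa_i)$ regular local essentially smooth over ${\bf Z}_{(p)}$ and $b_i\in\aa_i^2$.

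The first task is to realize this at a single index. Because $B$ is Noetherian and essentially of finite presentation over ${\bf Z}_{(p)}$, a standard limit argument should stabilize the system: finitely many algebra generators and relations of $B$, together with the data witnessing flatness and geometric regularity of the relevant fibres, already live over a single $R_{i_0}$, and essential smoothness descends from the limit to that level. The outcome I aim for is that $B$ is essentially smooth over $C_{i_0}=A_{i_0}/(p-b_{i_0})$ for some index $i_0$; write $A_0=A_{i_0}$, $C_0=C_{i_0}$, $b_0=b_{i_0}$. I expect making this descent precise to be the chief obstacle, since it is exactly here that one uses $B$ to be essentially of finite type: for a general regular local ring the inductive limit in Theorem \ref{tm} is unavoidable, whereas finiteness is what lets the system stabilize. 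This is also the step that silently carries the deep input, namely that the blocks $A_i$ are smooth (geometrically regular) over ${\bf Z}_{(p)}$ rather than merely regular — the property a naive presentation of $B$ as a hypersurface in a polynomial ring would fail to guarantee.

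It then remains to pass from "$B$ essentially smooth over $C_0=A_0/(p-b_0)$'' to the exact form. I would first absorb the relative dimension: by the local structure of essentially smooth morphisms, $B$ is essentially \'etale over a localized polynomial extension $C_0[Z_1,\dots,Z_e]$, and this extension is $A_0'/(p-b_0)$ for the localized polynomial ring $A_0'=A_0[Z_1,\dots,Z_e]$, which is again essentially smooth over ${\bf Z}_{(p)}$ with $b_0\in(\aa_0')^2$. Replacing $A_0$ by $A_0'$ (and $C_0$ by $A_0'/(p-b_0)$), I may assume $B$ is essentially \'etale over $C_0$. Writing this standard \'etale extension as $B=(C_0[T]/(f))_{\qq}$ with $f$ monic and $f'$ a unit in $B$, I would lift $f$ coefficientwise to a monic $\tilde f\in A_0[T]$ and set $A$ to be the localization of $A_0[T]/(\tilde f)$ at the prime lying over $\mm_B$.

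Finally I verify that $A$ works. By construction $A$ is local with $A/(p-b_0)=B$; since $p-b_0$ lies in the maximal ideal of the local ring $A$ and $f'$ is a unit in $B=A/(p-b_0)$, the derivative $\tilde f'$ is a unit in $A$, so $A$ is essentially \'etale over $A_0$, hence essentially smooth over ${\bf Z}_{(p)}$, and therefore essentially smooth over ${\bf Z}$ (as ${\bf Z}_{(p)}$ is a localization of ${\bf Z}$). Being essentially smooth over a regular ring, $A$ is regular local, and $b_0\in\aa_0^2\subseteq\mm_A^2$, so $B=A/(p-b_0)$ is the required presentation.
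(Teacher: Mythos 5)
Your reduction to the case $0\neq p=\chara k\in\mm_B^2$ (consistent with the paper's intent, since the presentation $B=A/(p-b)$ with $b\in\aa^2$ forces $p\in\mm_B^2$) and your endgame --- absorbing the relative dimension into $A_0$, writing the essentially \'etale extension as $(C_0[T]/(f))_{\qq}$ with $f$ monic, and lifting $f$ across $(p-b_0)$ --- are sound; the last step even supplies a detail the paper leaves implicit. The genuine gap is exactly at the step you yourself call ``the chief obstacle'': the claim that $B$ is essentially smooth over $C_{i_0}$ for a single index, obtained by ``a standard limit argument.'' No such argument applies here. Spreading-out/descent results concern a fixed finitely presented morphism over a filtered system of \emph{bases}; in Theorem \ref{tm} each $R_i$ is essentially smooth over its \emph{own} base $C_i=A_i/(p-b_i)$, and no compatibility of the $C_i$ along the transition maps is asserted, so there is nothing to stabilize. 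What finite presentation of $B$ over ${\bf Z}_{(p)}$ actually yields is only that the identity of $B$ factors as $B\to R_{i_0}\to B$, i.e.\ $B$ is a ${\bf Z}_{(p)}$-algebra retract, hence a quotient $R_{i_0}/q$. That does not make $B$ essentially smooth over $C_{i_0}$: the section and retraction are merely ${\bf Z}_{(p)}$-algebra maps, not $C_{i_0}$-algebra maps, so even the true fact that formal smoothness passes to retracts over a fixed base is unavailable, and a quotient of an essentially smooth $C_{i_0}$-algebra is of course not in general essentially smooth over $C_{i_0}$. The paper flags precisely this pitfall (``we cannot conclude that $B$ is among these $D$'') and illustrates it with the Remark following the Corollary ($B=\dirlim B[X_n]$ with $X_n\mapsto 0$).

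What the paper does instead, and what your argument is missing, is a structural analysis of the retract: write $B=D/q$ with $D=R_{i_0}$; lift $D$ to an essentially smooth ${\bf Z}_{(p)}$-algebra $D'$ with $D=D'/(p-b')$ and replace $(D,q)$ by $(D',q')$; then use that $D$ is an \'etale neighborhood of a localized polynomial ring ${\bf Z}_{(p)}[Y]$ with $(p-b,Y)$ generating the maximal ideal, that $B$ and $D$ are both regular local so $q$ is generated by part of a regular system of parameters $p-b,z_1,\dots,z_r$, and that after a linear change of the $Y$'s one may arrange $z_i=Y_i$. This exhibits $B=D/q$ as an \'etale neighborhood of a localization of ${\bf Z}_{(p)}[Y_{r+1},\dots,Y_t]/(p-b'')$ --- essentially smooth over a \emph{newly constructed} ring of the required shape, not over any of the original $C_i$. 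From that point your monic-lifting step applies verbatim; without the regular-system-of-parameters and coordinate-change argument, the proof does not go through.
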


\begin{proof} Applying Theorem \ref{tm} to $B$ we see that there exists a regular local ring $D$,
 essentially smooth over a regular local $\bf Z$-algebra $A/(p-b)$, where $(A,\aa)$ is a regular local ring, essentially smooth over  
 ${\bf Z}_{(p)}$, and $b\in \aa^2$ such that the identity of $B$ factors through $D$.   Then $B\cong D/q$ for some prime ideal $q\subset D$. Unfortunately, we cannot conclude that $B$ is among these $D$ (see the next remark).

 Note that  $D$ is a factor of an essentially smooth ${\bf Z}_{(p)}$-algebra $D'$ by $(p-b')$, where $b'$ is a lifting of $b$ to $D'$. Let $q'$ be the prime ideal of $D'$ containing $p-b'$ and such that $q'/(p-b')=q$. Changing $D$ by $D'$ and $q$ by $q'$ we may assume that $D$ is essentially smooth over   ${\bf Z}_{(p)}$. Then
$D$ is an   etale neighborhood of a localization of a polynomial algebra in $t$ variables $Y$ over  ${\bf Z}_{(p)}$ (see e.g. \cite[Theorem 2.5]{S}) and $(p-b,Y)$ generates the maximal ideal of $D$. Since $D$, $B$ are regular local we see that $q$ is generated by a part of a regular system of parameters of $D$, let us say $p-b,z$, where $z=(z_1,\ldots,z_r)$, $r\leq t$. After some linear transformations on $Y$ we may assume that $z_i=Y_i$, $1\leq i\leq r$. Then $B=D/q$ is an etale neighborhood of a localization of ${\bf Z}_{(p)}[Y_{r+1},\ldots,Y_t]/(p-b'')$, $b''$ being induced by $b$.
  \hfill\ \end{proof}

\begin{Remark}{\em Let $B$ be a regular local ring essentially of finite type over $\bf Z$, $D_n=B[X_n]$, $n\in {\bf N}$ and $\phi_{n,n+1}:D_n\to D_{n+1}$ be the $B$-morphism given by $X_n\mapsto 0$. Then $B$ is the limit of $(D_n,\phi_{n,n+1})$, the inclusion $B\subset D_n$ has a retraction and $B\not \cong D_n$ for any $n\in {\bf N}$.}
\end{Remark}

Next we will need the following two lemmas, the first one is elementary (see e. g. \cite[Theorem 4.2]{P1}).

\begin{Lemma} \label{l2} Let $R$ be a regular local ring, which is a filtered inductive limit of some regular local rings $(R_i)_{i\in I}$. If the BQ Conjecture holds for all $R_i$, $i\in I$, then it holds for $R$ too.
\end{Lemma}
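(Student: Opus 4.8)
The plan is to reduce the Bass-Quillen property for $R$ to that of a single $R_j$ by descending the projective module down the filtered system, applying the hypothesis at that finite level, and then base changing the resulting extended module back up to $R$. I would use throughout that a filtered inductive limit commutes with the finite polynomial extension, so that $R[T]=\dirlim_{i\in I} R_i[T]$ and likewise $M_m(R[T])=\dirlim_{i\in I} M_m(R_i[T])$ for every $m$.

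First I would set up the module. Let $P$ be a finitely generated projective $R[T]$-module. Since $P$ is a direct summand of some finite free module $R[T]^m$, it is the image of an idempotent matrix $e\in M_m(R[T])$, $e^2=e$, with $P\cong \Im(e)$ and $R[T]^m=\Im(e)\oplus\Im(1-e)$.

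The key step is to descend this idempotent to a finite level. Lifting the finitely many entries of $e$ to $M_m(R_i[T])$ for some $i$ gives a matrix $\tilde e$ whose entries are polynomials, and the entries of $\tilde e^2-\tilde e$ are then polynomials over $R_i$ mapping to $0$ in $R[T]$. As these involve only finitely many coefficients, all lying in $R=\dirlim_{i\in I} R_i$, the filtered property provides an index $j\geq i$ at which every such coefficient already vanishes. Hence the image $e_j\in M_m(R_j[T])$ of $\tilde e$ satisfies $e_j^2=e_j$, and $P_j:=\Im(e_j)$ is a finitely generated projective $R_j[T]$-module. Base change carries the decomposition $R_j[T]^m=\Im(e_j)\oplus\Im(1-e_j)$ to the corresponding one over $R[T]$, so that $R[T]\otimes_{R_j[T]}P_j\cong \Im(e)\cong P$.

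Now I would invoke the hypothesis that the BQ Conjecture holds for $R_j$: writing $M_j:=P_j/(T)P_j$, this gives $P_j\cong R_j[T]\otimes_{R_j}M_j$. Base changing to $R[T]$ yields $P\cong R[T]\otimes_{R_j[T]}P_j\cong R[T]\otimes_{R_j}M_j$. Finally, reduction modulo $(T)$ gives $P/(T)P\cong R\otimes_{R_j}M_j$, whence $P\cong R[T]\otimes_{R_j}M_j\cong R[T]\otimes_R(R\otimes_{R_j}M_j)\cong R[T]\otimes_R(P/(T)P)$, which is exactly the assertion that $P$ is extended from $R$. The main obstacle is the descent step: one must be sure that both the relation $e_j^2=e_j$ and the isomorphism $R[T]\otimes_{R_j[T]}P_j\cong P$ genuinely hold at one finite index, which is precisely where the filtered hypothesis on $I$ is indispensable; everything else is routine base change. (Regularity of $R$ and the $R_i$ plays no active role here beyond making the BQ statement meaningful.)
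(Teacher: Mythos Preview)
Your proof is correct and follows essentially the same approach as the paper: both descend the idempotent defining $P$ through the filtered system $R[T]=\dirlim_i R_i[T]$ to obtain a finitely generated projective $R_j[T]$-module $P_j$ with $R[T]\otimes_{R_j[T]}P_j\cong P$, and then apply the BQ hypothesis at level $j$. The only cosmetic difference is that the paper immediately concludes $P_j$ is \emph{free} (since $R_j$ is local, $P_j/(T)P_j$ is free and hence so is $P_j\cong R_j[T]\otimes_{R_j}(P_j/(T)P_j)$), and therefore $P$ is free; you instead carry the ``extended'' form back up and identify $P/(T)P$, which of course amounts to the same thing over a local base.
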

\begin{proof} Let $M$ be a finitely generated projective  module over $R[T]$, $T=(T_1,\ldots,T_n)$. Then $M\cong R\otimes_{R_i} M_i$ for some   finitely generated projective  $R_i[T]$-module $M_i$. Indeed, if $M$ is defined by an idempotent $\phi$ from $\End(L)$ for some $L=R^t$ then we may find $i$ such that $\phi$ is extended from an endomorphism $\phi_i$ of $R_i^t$. Also we may find $i$ such that $\phi_i$ is idempotent, and  defines the wanted $M_i$. As BQ Conjecture holds for $R_i$ we get $M_i$ free and so $M$ is free too.
\hfill\ \end{proof}

The following lemma follows easily from \cite{L}. However, we give here a proof in sketch.

\begin{Lemma}\label{l1} Let $R\to R'$ be an essentially smooth morphism between regular local rings. If the BQ Conjecture holds for $R$ then it holds for $R'$ too.
\end{Lemma}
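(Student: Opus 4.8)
The plan is to reduce the Bass--Quillen statement for $R'$ to that for $R$ by splitting the essentially smooth morphism into an honest polynomial extension followed by an essentially \'etale one, treating the first by a direct computation and the second by Lindel's patching technique. Since $R'$ is local, a finitely generated projective module over $R'[T]$, $T=(T_1,\dots,T_n)$, is extended from $R'$ precisely when it is free; so the goal is to show that every finitely generated projective $R'[T]$-module is free, given that every finitely generated projective over $R[Z]$, $Z=(Z_1,\dots,Z_m)$, is extended from $R$ for all $m$.

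First I would record the polynomial case. If $P$ is a finitely generated projective over $R[Y][T]=R[Y,T]$ with $Y=(Y_1,\dots,Y_d)$, then applying the hypothesis for $R$ in the $d+n$ variables $(Y,T)$ yields $P\cong R[Y,T]\otimes_R\bigl(P/(Y,T)P\bigr)$, whence $P\cong R[Y,T]\otimes_{R[Y]}\bigl(P/(T)P\bigr)$ is extended from $R[Y]$. Thus the Bass--Quillen conjecture holds for the (non-local) polynomial ring $R[Y]$. Next I would invoke the local structure of essentially smooth morphisms: there exist variables $Y=(Y_1,\dots,Y_d)$ so that $R\to R'$ factors as $R\to R[Y]\to R'$ with $R[Y]\to R'$ essentially \'etale (cf. \cite[Theorem 2.5]{S}), $R[Y]$ regular and $d$ the relative dimension. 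Everything is thereby reduced to the following descent statement: if the Bass--Quillen conjecture holds for the regular ring $A:=R[Y]$ and $A\to R'$ is essentially \'etale with $R'$ regular local, then projectives over $R'[T]$ are free.

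The hard part will be exactly this \'etale descent, and it is here that Lindel's argument in \cite{L} is used. The plan is to exploit that $A\to R'$ is essentially \'etale to transfer a trivialization of a projective $P$ over $R'[T]$ between $R'$ and the polynomial base: after inverting a suitable monic polynomial in one variable $T_i$, one realizes $P$ as coming from a module over $A[T]$, where it is extended by the previous step; Horrocks' monic-polynomial theorem then promotes ``free after inverting a monic'' to freeness in that variable, and Quillen's local--global principle \cite{Q} lets one proceed one variable at a time. The delicate point, which is the technical core of \cite{L}, is the construction of the monic polynomial and the control of the \'etale comparison along the maximal ideal of $R'$; I expect the sketch to consist essentially in verifying that Lindel's construction, set up in \cite{L} with a polynomial algebra over a field as base (where the needed input is the Quillen--Suslin theorem), applies with $A=R[Y]$ as base, the required input now being the Bass--Quillen conjecture for $A$ established in the previous step, since only regularity and the essentially \'etale structure of $A\to R'$ enter the argument.
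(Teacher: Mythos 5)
Your proposal follows essentially the same route as the paper's sketch: factor $R\to R'$ through a (localized) polynomial algebra over $R$ via Swan's structure theorem \cite[Theorem 2.5]{S}, transfer the BQ property from $R$ to that polynomial algebra, and then invoke Lindel's \'etale-neighborhood result from \cite{L} for the remaining essentially \'etale step. The only cosmetic difference is that you establish the polynomial step by the direct base-change computation, whereas the paper cites Roitman \cite{R} (whose localization theorem also covers the passage from $R[Y]$ to its localization, a point you fold into the Lindel step, where it is equally well handled by the hypotheses of Lindel's Corollary).
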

\begin{proof} $R'$ is an etale neighborhood of a localization of a polynomial algebra $A$ over $R$ (see e.g. \cite[Theorem 2.5]{S}). If the  BQ Conjecture holds for $R$ then it holds for $A$ too by \cite{R}. Now it is enough to apply the Corollary from \cite{L}.
\hfill\ \end{proof}

\begin{Theorem}\label{bq} If Question \ref{q2} has a positive answer then the BQ Conjecture holds for all regular rings.
\end{Theorem}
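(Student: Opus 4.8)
The plan is to reduce the general statement to the case of regular \emph{local} rings and then to dispatch that case by the residue-characteristic dichotomy already isolated in the excerpt. First I would invoke Quillen's local-global principle (patching): a finitely generated projective module $P$ over $R[T]$, $T=(T_1,\ldots,T_n)$, is extended from $R$ as soon as $P_\mm$ is extended from $R_\mm$ for every maximal ideal $\mm$ of $R$. Since each localization $R_\mm$ of a regular ring $R$ is regular local, this shows that it suffices to verify the BQ Conjecture for regular local rings.

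So let $(R,\mm,k)$ be a regular local ring and set $p=\chara k$. I would split into two cases according to the position of $p$. If $p=0$ or $p\notin\mm^2$, then the BQ Conjecture holds for $R$ by Corollary \ref{c1}, with nothing further to prove.

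The substantive case is $0\neq p\in\mm^2$. Here I would apply Theorem \ref{tm}: $R$ is a filtered inductive limit of regular local rings $R_i$, each essentially smooth over a regular local ${\bf Z}$-algebra of the shape $A_i/(p-b_i)$, where $(A_i,\aa_i)$ is regular local and essentially smooth over ${\bf Z}_{(p)}$ and $b_i\in\aa_i^2$. These base rings $A_i/(p-b_i)$ are precisely the rings addressed by Question \ref{q2}, so a positive answer to that question gives the BQ Conjecture for every $A_i/(p-b_i)$. Climbing the essentially smooth morphism $A_i/(p-b_i)\to R_i$ by Lemma \ref{l1} yields the BQ Conjecture for each $R_i$, and passing to the filtered limit by Lemma \ref{l2} yields it for $R$. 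This settles the local case, and hence, via the reduction above, the theorem.

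Because every individual step is furnished by a quoted result, the real content is threefold: confirming that Quillen patching legitimately reduces the conjecture to the local setting, checking that the two cases $\{p=0 \text{ or } p\notin\mm^2\}$ and $\{0\neq p\in\mm^2\}$ are genuinely exhaustive, and matching the base rings of Theorem \ref{tm} with the hypotheses of Question \ref{q2}. The main obstacle I anticipate is essentially the careful invocation of the local-global reduction, since all the ring-theoretic structure has been pre-packaged into Theorem \ref{tm} and Lemmas \ref{l1} and \ref{l2}; the genuine difficulty of the whole circle of ideas is of course concentrated in Question \ref{q2}, which is assumed here as a hypothesis.
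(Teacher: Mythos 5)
Your proposal is correct and follows essentially the same route as the paper: Quillen patching to reduce to the local case, disposal of the case $p=0$ or $p\notin\mm^2$ by the known results, and then Theorem \ref{tm} combined with Question \ref{q2}, Lemma \ref{l1} and Lemma \ref{l2} for the case $0\neq p\in\mm^2$. The only cosmetic difference is that the paper first notes BQ for $A_i$ via Lemma \ref{l1} before invoking Question \ref{q2} for $A_i/(p-b_i)$, a step you rightly observe is not needed since the question directly supplies BQ for those quotients.
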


\begin{proof} By Quillen's Patching Theorem \cite[Theorem 1]{Q} we may prove the conjecture only for   regular local rings. Let $(R,\mm,k)$ be a regular local ring. Using \cite[Theorem 3.1]{P1} we may suppose that $0\not =p:=$char $ k \in\mm^2 $.

 By Theorem \ref{tm},  $R$  
 is a filtered inductive limit of some regular local rings  $D$,  essentially smooth over a regular local ring of the form $A/(p-b)$, where $(A,\aa)$ is a regular local ring, essentially smooth over  $\bf Z$ and $b\in \aa^2$.

  Then the BQ Conjecture 
 holds for $A$   by Lemma \ref{l1}  and for $A/(p-b)$ by Question \ref{q2}. Applying Lemma \ref{l1} it follows that BQ holds for $D$. The final result is a consequence of Lemma \ref{l2}. 
\hfill\ \end{proof}

We end the section with a special form of Theorem \ref{tm} in the frame of the discrete valuation rings (DVRs for short) in the idea of  \cite[Theorem 8]{P3}. Actually, \cite[Theorem 8]{P3} has a complicated proof given to illustrate \cite[Theorem 17]{P3} in the DVR case. The proof below is easier and does not use N\'eron Desingularization.

\begin{Theorem} Let $(A,\mm,k)$ be a DVR with $0\not = p=$char $k\in \mm^2$. Suppose that $k$ is separably generated over ${\bf F}_p$. Then $A$  is a filtered inductive union of DVRs
essentially  of finite type over $\bf Z$.
\end{Theorem}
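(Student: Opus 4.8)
The plan is to realize $A$ as the directed union of its sub-DVRs of the form $O_K = A \cap K$, where $K$ ranges over suitable finitely generated subfields of $L = \operatorname{Frac}(A)$ chosen so that each $O_K$ is a DVR essentially of finite type over ${\bf Z}$. Fix a uniformizer $\pi$ of $A$; since $p \in \mm^2$ the normalized valuation $v$ satisfies $v(p)=e\ge 2$, and $p = c\,\pi^{e}$ with $c\in A$ a unit. Because $k$ is separably generated over ${\bf F}_p$, I would first write $k=\bigcup_j k_j$ as the filtered union of its finitely generated subextensions; each $k_j$ is again separable over ${\bf F}_p$, hence separably generated, so it carries a separating transcendence basis $\bar x_1,\dots,\bar x_{d}$ together with a primitive element $\bar\theta$ whose minimal polynomial $\bar g\in{\bf F}_p(\bar x_1,\dots,\bar x_d)[Y]$ is separable.

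For each $j$ I would build $O_K$ as follows. Lift the basis to units $x_1,\dots,x_d\in A$; a standard reduction modulo $\mm$ (clearing the largest power of $p$ from any integral relation) shows the $x_i$ are algebraically independent over ${\bf Q}$. Choose a lift $\theta\in A$ of $\bar\theta$, and by clearing denominators and lifting the coefficients of $\bar g$ produce a monic polynomial $g$ over the localization $\mathbb{Z}[x_1,\dots,x_d]_{\pp}$, $\pp=\mm\cap\mathbb{Z}[x]$, reducing to $\bar g$; separability of $\bar g$ makes its discriminant a unit at $\pp$, so adjoining a root is étale in the horizontal directions. Set $K={\bf Q}(\pi,x_1,\dots,x_d,\theta)$ and $O_K=A\cap K$. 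As $\pi\in K$ has $v(\pi)=1$, the value group of $v|_K$ is all of ${\bf Z}$, so $O_K$ is a DVR dominated by $A$, with uniformizer $\pi$ and residue field containing $k_j$.

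To see $O_K$ is essentially of finite type over ${\bf Z}$, let $B=\mathbb{Z}[\pi,x_1,\dots,x_d,\theta]$ be a finite-type model, $\qq=\mm\cap B$, so $O_K=B_{\qq}$. For a domain finitely generated over ${\bf Z}$ with fraction field of characteristic zero one has $\dim B=\trdeg_{\bf Q}\operatorname{Frac}(B)+1$, whence $\height(\qq)=\trdeg_{\bf Q}K+1-\trdeg_{{\bf F}_p}k_j$; thus $\dim O_K=1$ \emph{exactly} when $\trdeg_{\bf Q}K=\trdeg_{{\bf F}_p}k_j=d$, i.e. when $\theta$ is algebraic over ${\bf Q}(x_1,\dots,x_d)$ and $\pi$ is algebraic over ${\bf Q}(x_1,\dots,x_d,\theta)$. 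Granting this, $O_K$ is one-dimensional and regular — the étale horizontal part contributes nothing to the maximal ideal beyond $\pp$, and the single vertical parameter $\pi$ generates $\mm_{O_K}$ — hence a DVR; being a divisorial valuation ring with finitely generated residue field (the local ring at a codimension-one point of a ${\bf Z}$-model), it is essentially of finite type over ${\bf Z}$. Finally, given any finite $S\subseteq A$, enlarging $k_j$ and $K$ to absorb the residues of $S$ and the elements of $S$ yields $O_K\supseteq S$; the $O_K$ are directed under inclusion and their union is all of $A$, which is the assertion.

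The main obstacle is precisely this dimension drop: arranging that $\pi$ and the separable generator $\theta$ are algebraic over ${\bf Q}(x_1,\dots,x_d)$, rather than merely having algebraic residues. The relation $c\,\pi^{e}=p$ shows $\pi$ is algebraic over ${\bf Q}(x)$ as soon as the unit $c$ is a rational expression in $x$ and $\pi$, and a Hensel-type/étale argument resting on the separability of $\bar g$ is what lets one select $\theta$, and adjust the lifts $x_i$, so that both algebraicity requirements hold simultaneously inside $A$ itself and not only in its completion. This is exactly the step that in the general regular case forced N\'eron desingularization in \cite[Theorem 8]{P3}; here the rank-one structure confines all non-smoothness to the single vertical direction cut out by $\pi$, while separable generation of $k/{\bf F}_p$ makes the horizontal geometry étale, so the selection can be carried out by hand. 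I would expect the bulk of the work to lie in verifying this algebraicity and the resulting regularity of $O_K$, the directedness and the equality $\bigcup O_K=A$ being routine.
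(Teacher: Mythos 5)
Your overall framework --- exhausting $A$ by the sub-DVRs $A\cap K$ for finitely generated subfields $K$ of $\operatorname{Frac}(A)$ and checking each one is essentially of finite type over $\ZZ$ --- is the same as the paper's, and your reduction of ``essentially of finite type'' to the algebraicity of $\pi$, $\theta$ (and, for the exhaustion step, of every element of $A$) over $\QQ(x_1,\dots,x_d)$ is the correct diagnosis. But you then leave exactly that step unproved: you call it ``the main obstacle,'' propose to handle it by ``adjusting the lifts'' via a Hensel/\'etale selection, and expect the bulk of the work to lie there. That step \emph{is} the theorem. A Hensel-type argument cannot work as described: $A$ is not assumed Henselian, and no adjustment of the lifts $x_i$ or of $\theta$ will make a genuinely transcendental element of $A$ algebraic over $\QQ(x)$; moreover your closing move (``enlarge $K$ to absorb a finite set $S\subseteq A$'') silently assumes that every element of $A$ becomes algebraic over $\QQ(x)$ once its residue is absorbed into $k_j$. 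Your own Abhyankar-type dimension count shows that $A\cap K$ fails to be essentially of finite type the moment $K$ contains such a transcendental element, so without this algebraicity the union $\bigcup O_K$ need not reach $A$.

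The paper attacks this head-on: it \emph{proves}, rather than arranges, that every finitely generated subfield $L$ of $\operatorname{Frac}(A)$ containing $\QQ(y)$, where $y$ lifts a separating transcendence basis of $k$, is finite over $\QQ(y)$. The mechanism is a ramification count: if $z\in L$ were transcendental over $\QQ(y)$, one compares $A\cap\QQ(y,z^{p^r})\subset A\cap\QQ(y,z)$; the induced residue field extension is purely inseparable, yet also separable because both residue fields lie between ${\bf F}_p(\bar y)$ and $k$ and $k/{\bf F}_p(\bar y)$ is separable algebraic, hence it is trivial, which is then used to force a ramification index $p^r$ exceeding the fixed value of $v(p)$ for $r\gg 0$ --- a contradiction. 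Once $L/\QQ(y)$ is finite, $A\cap L$ is a localization of the module-finite (characteristic $0$) integral closure of $\ZZ[Y]_{p\ZZ[Y]}$ in $L$, and no divisorial-valuation criterion is needed. So the separability hypothesis enters not to enable an \'etale construction, as in your sketch, but to exclude transcendental elements outright; this is the idea you would have to supply. I would add a caution: test the required algebraicity on $A=\ZZ_p[\sqrt{p}\,]$, where $k={\bf F}_p$ while $\operatorname{Frac}(A)$ has enormous transcendence degree over $\QQ$. There most elements of $A$ are transcendental over $\QQ=\QQ(y)$, so this step cannot be routine and both your sketch and the paper's ramification argument deserve very careful scrutiny at exactly this point.
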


\begin{proof} As in \cite[Theorem 8]{P3}, let $y=(y_i)_{i\in I}$ be a system of elements of $A$ inducing a separable transcendence base $(\bar y)$ of $k$ over ${\bf F}_p$. Then $C_0={\bf Z}[Y]_{p{\bf Z}[Y]}$ for some variables $Y=(Y_i)_{i\in I}$ is a DVR and the map $C_0\to A$,
$Y\to y$ defines a ramified extension inducing an algebraic separable residue field extension ${\bf F}_p( {\bar y})\subset k$.

Note that $A$ is a filtered inductive union of DVRs $A_L=A\cap L$ with $L\subset $Fr$(A)$ a finite type field extension of ${\bf Q}(y)$. We claim that $L$ must be finite over ${\bf Q}(y)$. Indeed,
assume that $z\in L $ is transcendental over ${\bf Q}(y)$ and $\mm^s\subset A$ for some $s\in {\bf N}$. Choose $r$ such that $p^r>s$ and consider the DVR extension $A_{L''}\subset A_{L'}$ for $L'={\bf Q}(y,z)$ and $L''={\bf Q}(y,z^{p^r})$. The residue field extension induced by $A_{L''}\subset A_{L'}$ is pure inseparable and also separable by assumption. Then it is trivial and so the ramification index of   $A_{L''}\subset A_{L'}$ is $p^r>s$. Contradiction!

Since $L$ is finite over ${\bf Q}(y)$ it follows that $A_L$ is a localization of the integral closure of $C_0$ in $L$ and so essentially finite over $C_0$, which is enough.
\hfill\ \end{proof}

\section{Swan's question in the non-reduced case}

We start this section reminding the first part of \cite[Theorem 17]{P3}.

\begin{Theorem} \label{s}  Let $(A,\mm,k)$ be a Noetherian local ring, $s=(s_1,\ldots,s_m)$ some positive integers and $\gamma=(\gamma_1,\ldots,\gamma_m)$ a system of nilpotents of $A$. Suppose that $0\not =p\in \mm^2$, $R=A/(\gamma)$ is a regular local ring and  $A$ is a flat  $\Phi={\bf Z}_{(p)}[\Gamma]/(\Gamma^s)$-algebra, $\Gamma\mapsto \gamma$ with $\Gamma=(\Gamma_1,\ldots,\Gamma_m)$ some  variables, and $(\Gamma^s)$ denotes the ideal $(\Gamma_1^{s_1}\cdots \Gamma_m^{s_m})$.
 Then $A$ is a   filtered inductive limit of some  Noetherian local $\Phi$-algebras $(F_i)_i$ essentially of finite type with $F_i/\Gamma F_i$ regular local rings. 
\end{Theorem}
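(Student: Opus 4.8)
The natural tool is Popescu's General N\'eron Desingularization (GND, see \cite{S}): a regular homomorphism of Noetherian rings is a filtered inductive limit of smooth finite type homomorphisms. Every local $\Phi$-algebra is already a filtered inductive limit of its essentially of finite type local $\Phi$-subalgebras, so the entire content of the theorem is to arrange, after passing to a cofinal subsystem, that the reductions modulo $\Gamma$ are \emph{regular}. Reducing modulo the nilradical $(\Gamma)$ of $\Phi$ turns the hypothesis into the homomorphism ${\bf Z}_{(p)}\to R$ with $R=A/\Gamma A$ regular local, and the target property $F_i/\Gamma F_i$ regular is exactly a desingularization of this reduced situation, lifted back through the flat extension $\Phi\to A$.

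The point where a direct appeal to GND breaks down is the following. Flatness of $A$ over $\Phi$ together with locality forces $(\Gamma)A\cap\Phi=(\Gamma)\Phi$, hence $p\notin(\Gamma)A$ and $\bar p\neq 0$ in $R$; since $p\in\mm^2$ this gives $\bar p\in\mm_R^2\setminus\{0\}$, so $R$ is ramified of mixed characteristic and its closed fiber $R/pR$ over ${\bf Z}_{(p)}$ is a singular hypersurface. Consequently neither ${\bf Z}_{(p)}\to R$ nor $\Phi\to A$ (whose closed fiber is again $R/pR$) is regular, and this is precisely the ramified case left untouched by Theorem \ref{tls}(i),(ii). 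The whole difficulty sits here.

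The way around it is to treat the situation away from $p$ and at $p$ separately, letting the nilpotent relation $(\Gamma^s)$ carry the ramification at the closed point. Inverting $p$ gives $\Phi[1/p]={\bf Q}[\Gamma]/(\Gamma^s)\to A[1/p]$ whose only fiber is $R[1/p]$, regular over the perfect field ${\bf Q}$; this homomorphism is therefore regular, GND applies to it, and $A[1/p]$ becomes a filtered inductive limit of smooth $\Phi[1/p]$-algebras. I would spread these models out to essentially of finite type local $\Phi$-algebras $F_\mu$ with $F_\mu[1/p]$ smooth over $\Phi[1/p]$, so that the only possible defect of $F_\mu$ lies over $p$. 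It then remains to arrange that the reductions $F_\mu/\Gamma F_\mu$, essentially of finite type over ${\bf Z}_{(p)}$, are \emph{regular} local rings (they cannot be smooth over ${\bf Z}_{(p)}$ once $p\in\mm^2$). Here I would use that $A$ is $\Phi$-flat, so that $\gr_{(\Gamma)}A\cong R\otimes_{{\bf Z}_{(p)}}\gr_{(\Gamma)}\Phi$: the whole nilpotent structure of $A$ is built from the regular ring $R$, and this is what lets one transfer regularity of $R$ to suitable finite type models order by order along the $(\Gamma)$-adic filtration, producing $F_i$ with $F_i/\Gamma F_i$ regular and $A=\varinjlim F_i$.

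The main obstacle is exactly this last transfer. Because $p\in\mm^2$ the homomorphism $\Phi\to A$ fails to be regular, so GND is unavailable over $\Phi$ and the resolution cannot be read off from the generic fiber alone; one must manufacture regular finite type models of the ramified ring $R$ and lift them flatly through the nilpotent thickening, using the relation $(\Gamma^s)$ in an essential way. Keeping the $F_i$ essentially of finite type and local with regular reduction, and matching the lifts across the filtered system, is the technical core of the argument and the content of \cite[Theorem 17]{P3}.
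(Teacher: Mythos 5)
Note first that the paper contains no proof of Theorem \ref{s}: it is quoted verbatim as ``the first part of \cite[Theorem 17]{P3}'', so the only thing to compare your proposal against is that citation. Your diagnosis of the obstruction is accurate --- faithful flatness of the local map $\Phi\to A$ does force $\bar p\neq 0$ in $R$, hence $\bar p\in\mm_R^2$, the closed fiber $R/pR$ is singular, and neither ${\bf Z}_{(p)}\to R$ nor $\Phi\to A$ is a regular morphism, so General N\'eron Desingularization cannot be invoked directly. But having located the difficulty, you do not resolve it. The part of your argument that is actually carried out (GND applied to $\Phi[1/p]\to A[1/p]$) contributes nothing to the conclusion: the condition to be achieved, that $F_i/\Gamma F_i$ be a \emph{regular local} ring, lives entirely at the maximal ideal, i.e.\ exactly where $p$ is not invertible, so smoothness of the models after inverting $p$ is vacuous for the statement. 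The step that would prove the theorem --- manufacturing essentially of finite type local $\Phi$-subalgebras of $A$ with regular reduction mod $\Gamma$, cofinal in the filtered system --- is asserted via the isomorphism $\gr_{(\Gamma)}A\cong R\otimes_{{\bf Z}_{(p)}}\gr_{(\Gamma)}\Phi$ and then explicitly deferred to \cite[Theorem 17]{P3}. Since \cite[Theorem 17]{P3} \emph{is} the statement being proved, this is circular, not a proof.

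Concretely, the gap is this: an arbitrary essentially of finite type local model of $R$ over ${\bf Z}_{(p)}$ will in general be singular, and because ${\bf Z}_{(p)}\to R$ is not regular there is no desingularization theorem that repairs it; the associated graded isomorphism only says that the nilpotent thickening is controlled by $R$, it does not produce regular finite type approximations of $R$ itself. That production is Swan's question in the ramified case, and its actual solution (reflected in Theorem \ref{tm} here) proceeds by presenting the ring as a quotient $A_i/(p-b_i)$ of something essentially smooth over ${\bf Z}_{(p)}$ with $b_i\in\aa_i^2$ and running a N\'eron-type desingularization adapted to that presentation --- not by an order-by-order lift along the $(\Gamma)$-adic filtration as you sketch. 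Your last paragraph is an honest acknowledgment of this, but it means the proposal is a correct statement of the problem rather than a proof of the theorem.
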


\begin{Remark} {\em  The above theorem holds also when $p=0$ or $p\not\in \mm^2$ as in \cite[Theorem 4.1]{P1} because the map $\Phi\to A$ is regular in this case.}
\end{Remark}

\begin{Lemma} \label{l4} In the notation and hypothesis of the above theorem, let $(B,\bb)$ be a Noetherian local ring,  and $z=(z_1,\ldots,z_m)$ a system of elements of $B$. Suppose that
\begin{enumerate}
\item for all $i\in [m]$ $s_i>1$, $z_i^{s_i}=0$,
\item for   all $i\in [m]$ and $j\in [s_i-1]$, $(z_1,\ldots,z_{i-1}):z_i^j= (z_1,\ldots,z_{i-1},z_i^{s_i-j})$,
\item $B/(z)$ is a regular local ring.
\end{enumerate}
Then $B$ is a flat $\Phi$-algebra by the map $\phi:\Phi \to B$, $\Gamma \mapsto z$.
\end{Lemma}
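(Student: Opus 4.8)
The plan is to induct on $m$ and deduce flatness at each step from the local criterion of flatness for the nilpotent ideal $(\Gamma_1)\subset\Phi$, the regularity hypothesis (3) entering only in the base case. First I would note that $\phi$ is well defined: $B$ is a ${\bf Z}_{(p)}$-algebra (its residue field has characteristic $p$), and by (1) each generator $\Gamma_i^{s_i}$ of the defining ideal of $\Phi={\bf Z}_{(p)}[\Gamma]/(\Gamma_1^{s_1},\dots,\Gamma_m^{s_m})$ is sent to $z_i^{s_i}=0$, so $\Gamma\mapsto z$ induces $\phi\colon\Phi\to B$. I would also record the one structural fact about $\Phi$ that the argument needs. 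Writing $\Phi=C[\Gamma_1]/(\Gamma_1^{s_1})$ with $C={\bf Z}_{(p)}[\Gamma_2,\dots,\Gamma_m]/(\Gamma_2^{s_2},\dots,\Gamma_m^{s_m})$, the ring $\Phi$ is free over $C$ on $1,\Gamma_1,\dots,\Gamma_1^{s_1-1}$, and a direct check gives $\Ann_\Phi(\Gamma_1^{j})=\Gamma_1^{s_1-j}\Phi$ for $1\le j\le s_1-1$; in particular $\Ann_\Phi(\Gamma_1)=(\Gamma_1^{s_1-1})$. This is the exact internal mirror of hypothesis (2), and is what makes the periodic resolution used below work.

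For the inductive step put $\bar\Phi=\Phi/(\Gamma_1)=C$ and $\bar B=B/z_1B$, the latter a $\bar\Phi$-algebra via $\Gamma_j\mapsto z_j$. I would first verify that $(\bar B;z_2,\dots,z_m)$ again satisfies (1)--(3): (1) is inherited, (3) holds because $\bar B/(z_2,\dots,z_m)=B/(z)=R$ is regular, and (2) descends since the preimage in $B$ of $(z_2,\dots,z_{i-1})\bar B$ is $(z_1,z_2,\dots,z_{i-1})B$, so $(z_2,\dots,z_{i-1}):_{\bar B}z_i^{j}$ is the image of $(z_1,\dots,z_{i-1}):_{B}z_i^{j}=(z_1,\dots,z_{i-1},z_i^{s_i-j})$, namely $(z_2,\dots,z_{i-1},z_i^{s_i-j})$. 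Hence by the induction hypothesis $\bar B$ is flat over $\bar\Phi$.

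Next I would compute $\Tor_1^\Phi(\Phi/(\Gamma_1),B)$. By the annihilator identity above, a free resolution of $\Phi/(\Gamma_1)$ over $\Phi$ begins $\cdots\to\Phi\xrightarrow{\Gamma_1^{s_1-1}}\Phi\xrightarrow{\Gamma_1}\Phi\to\Phi/(\Gamma_1)\to0$; applying $-\otimes_\Phi B$ turns the differentials into multiplication by $z_1^{s_1-1}$ and $z_1$, so $\Tor_1^\Phi(\Phi/(\Gamma_1),B)=\Ann_B(z_1)/z_1^{s_1-1}B$. Hypothesis (2) with $i=j=1$ reads $\Ann_B(z_1)=(0):_Bz_1=(z_1^{s_1-1})$, so this $\Tor_1$ vanishes. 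Since $(\Gamma_1)$ is nilpotent in $\Phi$ (as $\Gamma_1^{s_1}=0$), while $B/(\Gamma_1)B=B/z_1B=\bar B$ is flat over $\Phi/(\Gamma_1)=\bar\Phi$ and $\Tor_1^\Phi(\Phi/(\Gamma_1),B)=0$, the local criterion of flatness for a nilpotent ideal (see \cite{M}) gives that $B$ is flat over $\Phi$, completing the step.

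The base case $m=0$ is where (3) does its real work: then $\Phi={\bf Z}_{(p)}$ and $B=R$ is regular local, and I must show $R$ is flat over the discrete valuation ring ${\bf Z}_{(p)}$, i.e.\ $p$-torsion free. I expect this to be the only genuine (if small) obstacle, since it would fail for an equicharacteristic-$p$ regular local ring; it is exactly here that the standing hypothesis $0\neq p$ enters, forcing $R$ to have mixed characteristic with $0\neq p\in\mm_R$. As a regular local ring $R$ is a domain, so the nonzero element $p$ is a nonzerodivisor, $R$ is torsion free, and hence flat over ${\bf Z}_{(p)}$. With this the induction closes; apart from the descent of (2) and the structural identity $\Ann_\Phi(\Gamma_1^{j})=\Gamma_1^{s_1-j}\Phi$, the only real input is the single $\Tor_1$ computation, which hypothesis (2) is precisely engineered to kill.
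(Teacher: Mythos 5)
Your proof is correct and follows essentially the same route as the paper: induction on $m$, the periodic free resolution of $\Phi/(\Gamma_i)$ over $\Phi$, vanishing of $\Tor_1$ forced by hypothesis (2), and the local criterion of flatness for a nilpotent ideal, with hypothesis (3) together with $0\neq p$ supplying flatness over ${\bf Z}_{(p)}$ at the bottom of the induction. Your choice to peel off $z_1$ rather than $z_m$ is in fact the cleaner one, since hypothesis (2) as stated gives exactly $(0):_Bz_1=(z_1^{s_1-1})$ and descends verbatim to $B/z_1B$, whereas the paper's sketch reduces modulo $\Gamma_m$ and leaves those two verifications implicit.
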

\begin{proof} Suppose $m=1$. Note that the minimal free resolution of $\Phi/(\Gamma)$,
$$ ...\to \Phi/(\Gamma^s)\xrightarrow{\Gamma}\Phi/(\Gamma^s)\xrightarrow{\Gamma^{s-1}}\Phi/(\Gamma^s)\xrightarrow{\Gamma}\Phi/(\Gamma^s)\to \Phi/(\Gamma)\to 0$$
gives after tensorizing with $B$ a minimal free resolution of $B/(z)$. Thus \\
$\Tor_1^{\Phi}(\Phi/(\Gamma),B)=0$. As $B/(z)$ is a flat $\bf Z$-algebra we get $\phi$ flat using the Local Criteria of Flatness \cite[Theorem 49, (20.C)]{M}. 

Induct on $m$ and assume $m>1$. Then $B/(z_m)$ is a flat  $\Phi/(\Gamma_m) $-algebra by induction hypothesis. As in the case $m=1$  we see that 
$\Tor_1^{\Phi}(\Phi/(\Gamma_m),B)=0$ and so $\phi$ is  flat using the Local Criteria of Flatness.
\hfill\ \end{proof}

\begin{Remark} {\em Let $R$ be a regular local ring, $s=(s_1,\ldots, s_m)\in {\bf N}^m$
and $z=(z_1,\ldots,z_m)$ be a part of a regular system of parameters of $R$. Then\\
 $B=R/(z_1^{s_1},\ldots,z_m^{s_m})$ satisfies the above lemma.}
\end{Remark}

Using the above lemma and Theorem \ref{s} we get the following result.

\begin{Theorem} \label{t4}  In the notation and hypothesis of the above lemma, $B$ is a   filtered inductive limit of some  Noetherian local $\bf Z$-algebras $(F_i)_i$ essentially of finite type with $F_i/\sqrt{(0)}$ regular local rings. 
\end{Theorem}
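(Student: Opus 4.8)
The plan is to apply Theorem \ref{s} to $B$ itself, with $z$ in the role of $\gamma$ and $\bb$ in the role of $\mm$, and then to rewrite its conclusion. First I would verify that the pair $(B,z)$ meets the hypotheses of Theorem \ref{s}: $(B,\bb)$ is Noetherian local; the $z_i$ form a system of nilpotents by condition $(1)$; $R=B/(z)$ is regular local by condition $(3)$; and the decisive flatness of the structure map $\phi\colon\Phi\to B$, $\Gamma\mapsto z$, is precisely the conclusion of Lemma \ref{l4}. Whatever the position of $p$, Theorem \ref{s} --- together with the Remark immediately following it, which disposes of the cases $p=0$ and $p\notin\bb^2$ --- then gives that $B$ is a filtered inductive limit of Noetherian local $\Phi$-algebras $(F_i)_i$, essentially of finite type over $\Phi$, with each $F_i/\Gamma F_i$ regular local.

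It then remains to replace ``$\Phi$'' by ``${\bf Z}$'' and ``$\Gamma F_i$'' by ``$\sqrt{(0)}$'' in this conclusion. The first change is immediate: $\Phi={\bf Z}_{(p)}[\Gamma]/(\Gamma^s)$ is a quotient of a localization of a finitely generated ${\bf Z}$-algebra, hence essentially of finite type over ${\bf Z}$, and therefore so is every $F_i$ that is essentially of finite type over $\Phi$.

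For the second change I would identify the nilradical of $F_i$ with $\Gamma F_i$. Since $\Gamma_j^{s_j}=0$ in $\Phi$, the image of each $\Gamma_j$ in $F_i$ is nilpotent, so $\Gamma F_i\subseteq\sqrt{(0)}$. Conversely $F_i/\Gamma F_i$ is regular local, hence an integral domain, so $\Gamma F_i$ is a prime ideal of $F_i$; as the nilradical is contained in every prime ideal, $\sqrt{(0)}\subseteq\Gamma F_i$. The two inclusions give $\sqrt{(0)}=\Gamma F_i$, so that $F_i/\sqrt{(0)}=F_i/\Gamma F_i$ is regular local, and the theorem follows.

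Since both the flatness and the structure result are already available (from Lemma \ref{l4} and Theorem \ref{s} respectively), there is no deep obstacle; the only place that calls for real care is the matching of hypotheses in the first step --- checking that the flatness delivered by Lemma \ref{l4} is exactly the input Theorem \ref{s} demands and that no condition on $p$ is truly needed --- after which the two rewriting steps are routine, the domain argument for $\sqrt{(0)}=\Gamma F_i$ being the only genuine computation.
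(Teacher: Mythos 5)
Your proposal is correct and follows exactly the route the paper intends: the paper gives no written proof beyond the remark ``Using the above lemma and Theorem \ref{s} we get the following result,'' and your argument simply fills in that combination, with the two rewriting steps (essential finite type over $\bf Z$ via $\Phi$, and $\sqrt{(0)}=\Gamma F_i$ via the primality of $\Gamma F_i$) carried out correctly.
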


As in Corollary \ref{c2} we get the following result.

\begin{Corollary} \label{c3} In the notation and hypothesis of Lemma \ref{l4}, suppose that $B/(z)$ is essentially of finite type over $\bf Z$. Then $B$ is  
 essentially smooth over a local $\Phi$-algebra $D$    of type $C/(p-b)$, where $(C,\qq)$ is a  local ring, esentially smooth over $\Phi$ with $b\in \qq^2$ (so $D/ \Gamma D$ and $C/\Gamma C$ are regular).
\end{Corollary}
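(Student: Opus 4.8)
The plan is to reduce the whole situation modulo the nilpotent ideal $\Gamma$, apply Corollary \ref{c2} to the regular local quotient, and then lift the resulting structure back across the nilpotent thickening $\Phi\to\Phi/\Gamma\Phi={\bf Z}_{(p)}$, using the $\Phi$-flatness of $B$ furnished by Lemma \ref{l4}. First I would record the standing facts. By Lemma \ref{l4} the map $\phi\colon\Phi\to B$ is flat, and the ideal $\Gamma=(\Gamma_1,\dots,\Gamma_m)$ is nilpotent in $\Phi$, so $\Gamma B$ is a nilpotent ideal of $B$ with $B/\Gamma B=B/(z)=R$ a regular local ring that, by hypothesis, is essentially of finite type over ${\bf Z}$. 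Lifting to $B$ a finite generating set of $R$ over ${\bf Z}_{(p)}$ and invoking nilpotence of $\Gamma B$ (a Nakayama argument), one sees that $B$ is essentially of finite type over $\Phi$.

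Next I would treat $R$. Since $R$ is regular local and essentially of finite type over ${\bf Z}$, the proof of Corollary \ref{c2} produces a local ring $(\bar C,\bar\qq)$ essentially smooth over ${\bf Z}_{(p)}$ (a suitable localization of a polynomial ${\bf Z}_{(p)}$-algebra) together with an element $\bar b\in\bar\qq^{2}$ such that $R$ is essentially smooth over $\bar D:=\bar C/(p-\bar b)$.

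Now I would lift this datum over $\Phi$. Because $\Phi\to{\bf Z}_{(p)}$ is a nilpotent thickening, the essentially smooth ${\bf Z}_{(p)}$-algebra $\bar C$ lifts to a local ring $(C,\qq)$ essentially smooth over $\Phi$ with $C/\Gamma C=\bar C$ (the smooth part lifts, and the \'etale localizations lift uniquely across a nilpotent thickening). Choosing $b\in\qq^{2}$ reducing to $\bar b$ and putting $D:=C/(p-b)$, one has $C/\Gamma C=\bar C$ and $D/\Gamma D=\bar D$ regular, as asserted. Moreover $D$ is flat over $\Phi$ by the Local Criterion of Flatness applied exactly as in Lemma \ref{l4}: from $0\to C\xrightarrow{\,p-b\,}C\to D\to 0$ one gets $\Tor_1^{\Phi}(\Phi/\Gamma,D)=\Ker\bigl(p-\bar b\colon\bar C\to\bar C\bigr)=0$ since $p-\bar b$ is a nonzerodivisor, while the closed fibre $\bar D$ is ${\bf Z}$-flat. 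Thus $D$ is a local $\Phi$-algebra of the required type $C/(p-b)$ with $b\in\qq^{2}$.

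It remains to equip the given $B$ with the structure of an essentially smooth $D$-algebra reducing to $R$ over $\bar D$. Writing $R$ as an \'etale neighbourhood of a localization of $\bar D[V]$ and lifting the \'etale part uniquely across the nilpotent thickening $D\to\bar D$, one obtains an essentially smooth $D$-algebra that is a $\Phi$-flat lift of $R$; formal smoothness then yields a $\Phi$-homomorphism $D\to B$ compatible with reduction modulo $\Gamma$, and $B$ is essentially of finite type over $D$ by the first paragraph. One then checks that $B$ is flat over $D$ with $B\otimes_D\bar D=R$ essentially smooth over $\bar D$; since $D\to\bar D$ is a nilpotent thickening, flatness of $B$ over $D$ together with smoothness of the closed fibre $R$ over $\bar D$ forces $B$ to be essentially smooth over $D$, which is the claim. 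I expect this last step to be the crux: matching the given $B$ with the lifted smooth $D$-algebra — equivalently, producing the homomorphism $D\to B$ while keeping $b$ inside $\qq^{2}$ and verifying flatness over $D$ — is the only nonformal point, and it is the precise analogue, now in the presence of the nilpotents $\Gamma$, of the coordinate normalization (the ``linear transformations on $Y$'') carried out in Corollary \ref{c2}.
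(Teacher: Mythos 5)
Your first three paragraphs are sound: $B$ is essentially of finite type over $\Phi$ by the Nakayama argument, Corollary \ref{c2} applies to $R=B/(z)$, and the lifting of $\bar C$ to an essentially smooth $\Phi$-algebra $(C,\qq)$, the choice of $b\in\qq^2$ over $\bar b$, and the $\Phi$-flatness of $D=C/(p-b)$ all go through. The gap is exactly where you place it, in the last paragraph, and it is not a point that can be ``checked'': it is the entire content of the statement. First, formal smoothness of $C$ over $\Phi$ only yields a lift $C\to B$ of $\bar C\to R$ under which $p-b$ lands in the nilpotent ideal $(z)B$, not at $0$. Because $b\in\qq^2$, every $\Phi$-derivation of $C$ sends $b$ into $\qq\cdot(\,\cdot\,)$, so altering the lift by a derivation, or replacing $b$ by another element of $\qq^2$ with the same reduction modulo $\Gamma C$, moves the image of $p-b$ only inside $\qq\,(z)B$ plus the image of $\qq^2\cap\Gamma C$; the discrepancy, however, is an arbitrary element of $(z)B$, so there is a genuine obstruction to producing any map $D\to B$ at all. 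Second, and more fundamentally, your $(C,b)$, hence $D$ and the essentially smooth $D$-algebra lifting $R$, are manufactured from $R$ alone, whereas $B$ is an arbitrary flat $\Phi$-deformation of $R$. Since $R=\bar C/(p-\bar b)$ is not smooth over ${\bf Z}_{(p)}$ (the partial derivatives of $p-\bar b$ lie in $\bar\qq$), the Andr\'e--Quillen module $T^1\bigl(R/{\bf Z}_{(p)},R\otimes(\Gamma)/(\Gamma)^2\bigr)$ is a nonzero quotient of $R\otimes(\Gamma)/(\Gamma)^2$ by a proper submodule, so $R$ admits non-isomorphic flat $\Phi$-deformations; a fixed $D$ carries, up to isomorphism, only one essentially smooth algebra reducing to $R$ over $\bar D$, so your single $D$ cannot serve for all of them, and the assertion ``one then checks that $B$ is flat over $D$ with $B\otimes_D\bar D=R$'' is false for a general $B$ with your choice of $(C,b)$.

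The paper's argument runs in the opposite direction. Since $B$ is $\Phi$-flat by Lemma \ref{l4}, the non-reduced approximation theorem \cite[Theorem 17]{P3} --- of which Theorem \ref{s} is the first part, and whose full statement exhibits the approximating algebras as essentially smooth over rings $C_i/(p-b_i)$ with $(C_i,\qq_i)$ essentially smooth over $\Phi$ and $b_i\in\qq_i^2$ --- is applied to $B$ itself. As $B$ is essentially of finite type, the identity of $B$ factors through one of these algebras, and the structure is then descended to $B$ exactly as in the proof of Corollary \ref{c2}. In other words, the pair $(C,b)$ must be extracted from the $\Phi$-algebra $B$, not lifted from its reduction $R$; the deformation-theoretic difficulty you correctly identify as the crux is precisely what \cite[Theorem 17]{P3} is invoked to resolve.
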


\begin{Theorem} \label{bqn}  Let $(A,\mm,k)$ be a Noetherian local ring,  $T=(T_1,\ldots,T_n)$ some variables and $\bb$ a nilpotent ideal of $A$. Suppose that 
 $R=A/\bb$ is a regular local ring and   the BQ Conjecture holds for $R$ (for example $p:=\mbox{char}\ k=0$, or $p\not \in \mm^2$).
Then any  finitely generated projective $A[T]$-module is free. 
\end{Theorem}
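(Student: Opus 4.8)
The plan is to descend along the nilpotent ideal $\bb$: first reduce $A[T]$ modulo $\bb$, invoke the BQ Conjecture for the regular local ring $R$ to obtain freeness over $R[T]$, and then lift this freeness back to $A[T]$, using the fact that $\bb A[T]$ is nilpotent and hence sits inside the Jacobson radical of $A[T]$. The whole argument is a Nakayama-type lift; there is no deep obstacle, only two places where localness is essential.

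Concretely, let $M$ be a finitely generated projective $A[T]$-module. Since projectivity is preserved under base change, $\overline{M}:=M/\bb M\cong M\otimes_{A[T]}R[T]$ is a finitely generated projective module over $R[T]=A[T]/\bb A[T]$. By hypothesis the BQ Conjecture holds for $R$, so $\overline{M}$ is extended from $R$, i.e. $\overline{M}\cong R[T]\otimes_R(\overline{M}/(T)\overline{M})$. Here is the first place localness enters: because $R$ is local, the finitely generated projective $R$-module $\overline{M}/(T)\overline{M}$ is free, and therefore $\overline{M}$ is a free $R[T]$-module, of some rank $r$ (well defined, as $R$ regular local is a domain, so $R[T]$ is a domain).

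Next I would perform the lifting. Since $\bb$ is nilpotent, the ideal $\bb A[T]$ is nilpotent, hence contained in the nilradical and therefore in the Jacobson radical $J(A[T])$; this is the second, and crucial, use of localness, and it is exactly where nilpotence of $\bb$ (rather than merely $\bb\subseteq\mm$) is needed. Choosing $x_1,\dots,x_r\in M$ whose residues form a basis of $\overline{M}$, Nakayama's Lemma (valid because $M$ is finitely generated and $\bb A[T]\subseteq J(A[T])$) shows they generate $M$, giving a surjection $\pi\colon A[T]^r\twoheadrightarrow M$. Projectivity of $M$ splits $\pi$, so $A[T]^r\cong M\oplus Q$ with $Q$ a finitely generated $A[T]$-module. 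Reducing modulo $\bb$ gives $R[T]^r\cong\overline{M}\oplus(Q/\bb Q)$ with $\overline{M}$ free of rank $r$; comparing ranks over $R[T]$ forces the projective $R[T]$-module $Q/\bb Q$ to be zero, and a final application of Nakayama's Lemma yields $Q=0$. Hence $M\cong A[T]^r$ is free, as required. I expect the write-up to be short, the only points deserving care being the passage from \emph{extended} to \emph{free} (needing $R$ local) and the containment $\bb A[T]\subseteq J(A[T])$ underlying both Nakayama steps.
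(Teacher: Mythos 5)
Your proposal is correct and follows essentially the same route as the paper: reduce modulo $\bb$, use the BQ Conjecture plus locality of $R$ to see that $\overline{M}$ is free, lift a basis to a surjection $A[T]^r\twoheadrightarrow M$ via nilpotence of $\bb$, and then kill the discrepancy. The only (harmless) variation is at the last step, where the paper shows the kernel $N$ satisfies $N=\bb N$ by a $\Tor_1$ computation, whereas you split the surjection and eliminate the complement $Q$ by a rank count over the domain $R[T]$ followed by Nakayama.
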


\begin{proof}
Let $M$ be a  finitely generated projective $A[T]$-module. Then ${\bar M}=M/\bb M$ is finitely generated projective over $R[T]$ and so it is free. Let $x=(x_1,\ldots,x_r)$ be a system of elements from $M$ inducing a basis in $\bar M$. Then $M=<x>+\bb M=\\
<x>+\bb^tM=<x>$ for $t\in {\bf N}$ with $\bb^t=0$. Thus the map $\phi:A[T]^n\to M$ given by $(a_1,\ldots,a_n)\mapsto \sum_ia_ix_i$ is surjective. Set $N=\Ker  \phi$. Tensorizing with $A[T]/(\bb)$ the exact sequence
$$0\to N\to A[T]^n\to M\to 0$$
we get the following exact sequence
$$\Tor_1^{A[T]}(M,A[T]/(\bb))\to N/\bb N\to R[T]^n\to {\bar M}\to 0,$$
where the first module is zero because $M$ is a flat $A[T]$-module. As the last map is injective we see that $N=
\bb N=\bb^tN=0$, that is $\phi$ is an isomorphism.
\hfill\ \end{proof}

\end{document}